\documentclass[a4paper, 11pt, english]{article}

\usepackage{lscape}
\usepackage{pdflscape}

\usepackage{stmaryrd}

\usepackage{cmll}

\usepackage{amscd}
\usepackage{amssymb,amsmath,amsthm}
\usepackage{mathptmx}
\usepackage{mathrsfs}
\usepackage{color}
\usepackage{xspace}
\usepackage{bussproofs}
\EnableBpAbbreviations

\usepackage{tikz}

  \usepackage{txfonts}

\usepackage{bpextra}

\usepackage{enumerate}

\usepackage{centernot}

\usepackage{mathtools}

\usepackage{hyperref}

\usepackage{cleveref}

\usepackage{relsize}

\usepackage{caption}

\usepackage{multirow}

\usepackage{multicol}

\usepackage{array}
\newcolumntype{C}[1]{>{\centering\arraybackslash}p{#1}}
\newcolumntype{L}[1]{>{\arraybackslash}p{#1}}

\usepackage{rotating}

\newcommand{\mand}{\vartriangle}
\def\fCenter{\vdash}
\newcommand\mAND{\,\bigcap\,}
\def\mBAND{\mbox{\,\raisebox{0ex}{\rotatebox[origin=c]{180}{$\bigsqcup$}}\,}}
\def\mRA{\mbox{\,\raisebox{0ex}{\rotatebox[origin=c]{-90}{$\bigcap$}}\,}}
\def\mBRA{\mbox{\,\raisebox{0ex}{\rotatebox[origin=c]{90}{$\bigsqcup$}}\,}}
\def\aga{\texttt{a}}

\def\mbra{\mbox{$\,-{\mkern-3mu\blacktriangleright}\,$}}
\usepackage{mathdots}
\newcommand{\fns}{\footnotesize}
\usepackage{bussproofs}
\usepackage{qtree}
\usepackage{rotating}
\newcommand{\lc}{\langle}
\newcommand{\rc}{\rangle}
\EnableBpAbbreviations

%
%
\setcounter{footnote}{0}
                         


\usepackage{amsmath}
\usepackage{amssymb}

\usepackage{enumerate}








\def\kn{\kern.1em}



\newtheorem{theo}{Theorem}[section]  

\newtheorem{prop}[theo]{Proposition}
\theoremstyle{definition}
\newtheorem{defi}[theo]{Definition}

\makeindex

\title{Multi-type Sequent Calculi}

\author{Sabine Frittella \and Giuseppe Greco \and Alexander Kurz \and
Alessandra Palmigiano\thanks{This research is supported by the NWO Vidi grant 016.138.314, the NWO Aspasia grant 015.008.054, and a Delft Technology Fellowship awarded to the fourth author in 2013.} \and Vlasta Sikimi\'{c}
}

\date{}

\begin{document}

\maketitle

\begin{abstract}
Display calculi are generalized sequent calculi which enjoy a `canonical' cut elimination strategy. That is, their cut elimination is uniformly obtained by verifying the assumptions of a meta-theorem, and is preserved by adding or removing structural rules. In the present paper, we discuss a proof-theoretic setting, inspired both to Belnap's Display Logic \cite{Belnap} and to Sambin's Basic Logic \cite{Sambin}, which generalises these calculi in two directions: by explicitly allowing different types, and by weakening the so-called \emph{display} and \emph{visibility} properties.

The generalisation to a multi-type environment makes it possible to introduce specific tools enhancing expressivity, which have proved useful e.g.\ for a smooth proof-theoretic treatment of multi-modal and dynamic logics \cite{Multitype, PDL}. The generalisation to a setting in which full display property is not required makes it possible to account for logics which admit connectives which are neither adjoints nor residuals, or logics that are not closed under uniform substitution. 

In the present paper, we give a general overview of the calculi which we refer to as \emph{multi-type calculi}, and we discuss their canonical cut elimination meta-theorem.

{\em Keywords:} cut elimination, display calculi, multi-type sequent calculi, non-classical logics, modal logic, dynamic logics.\\
{\em Math.\ Subject Class.\ 2010:} 03F05, 06D50, 06D05, 03G10, 06E15.
\end{abstract}

\section{Introduction}

The range of non-classical logics has been rapidly expanding, driven by influences from other fields which have opened up new opportunities for applications. The logical formalisms which have been developed as a result of this interaction have attracted the interest of a wider research community than the logicians, and their theory has been intensively investigated, especially w.r.t.\ their semantics and computational complexity.

However, most of these logics lack a comparable proof-theoretic development. More often than not, the hurdles preventing a standard proof-theoretic development for these logics are due precisely to some of their defining features which make them suitable for applications, such as e.g.\ their not being closed under uniform substitution, or the fact that (the semantic interpretations of) key connectives are not adjoints.

These difficulties caused the existing proposals in literature to be often ad hoc, not easily generalisable, and more in general lacking a smooth proof-theoretic behaviour. In particular, the difficulty in smoothly transferring results from one logic to another is a problem in itself, since these logics typically come in large families (consider for instance the family of dynamic logics), and hence proof-theoretic approaches which uniformly apply to each logic in a given family are in high demand (for an expanded discussion of the existing proof systems for dynamic epistemic logics, see \cite[section 3]{GAV}).

The problem of the transfer of results, tools and methodologies has been addressed in the proof-theoretic literature for the families of substructural and modal logics, and has given rise to the development of several generalisations of Gentzen sequent calculi (such as hyper-, higher level-, display- or labelled-sequent calculi).

The present paper focuses on the core technical aspects of a proof-theoretic methodology and set-up closely linked to Display logic \cite{Belnap} and Basic logic \cite{Sambin}. Instances of this set-up have appeared in \cite{GAV}, \cite{Multitype} and \cite{PDL} to account for (nonclassical versions of) Baltag-Moss-Solecki's dynamic epistemic logic and Propositional Dynamic Logic respectively. In ongoing work, this set-up is being applied to monotone modal logic, game logic, and linear logic. 

The present set-up generalizes display calculi in two directions: by allowing multi-type languages, and by dropping the so-called \emph{visibility} (i.e. all active formulas in all rules are in isolation or, equivalently, all the active contexts in all rules are empty), or \emph{segregation} property, a requirement which in principle substantially reduces the proof-power of the calculi which are to satisfy it, unless they also enjoy e.g.\ the display property. 

The additional expressivity provided by the multi-type setting allows for an improved proof-theoretic treatment of multi-modal and dynamic logics \cite{Multitype, PDL}. The generalisation to a setting in which display and visibility property are weakened makes it possible to account  both for logics---such as monotone modal logic---which admit connectives which are neither adjoints nor residuals, and for logics that are not closed under uniform substitution---such as dynamic epistemic logic.

The present paper is organized as follows: 
in section \ref{sec:multi}, we illustrate the main features of the multi-type setting without relying on one specific system in particular. In section \ref{sec:quasi}, we list a set of conditions generalising the (properly) displayable calculi of \cite{Wan02}. In Section \ref{sec:meta}, we discuss how these conditions guarantee the cut elimination meta-theorem for the multi-type calculi enjoying them.

\section{Multi-type calculi}
\label{sec:multi}

The present section is aimed at illustrating the environment of multi-type calculi. 
Our starting point is a propositional language, the terms of which form $n$ pairwise disjoint types $\mathsf{T_1}\ldots \mathsf{T_n}$.
As in display calculi, we allow both terms at the operational level---like for instance a propositional formula---and more terms at the structural level than the usual ones builded by means of Gentzen's comma---like for instance by using Belnap's star. We will use $a, b, c$ and $x, y, z$ to respectively denote operational and structural  terms of  unspecified (possibly different) type. Further, we assume that operational and structural connectives are given {\em both} within each type {\em and} also between different types.

As an example, consider the calculus given in \cite{Multitype} for the intuitionistic version of Baltag-Moss-Solecki's dynamic epistemic logic (IEAK) \cite{AMM}. This calculus admits four types: formulas $(\mathsf{Fm})$, actions $( \mathsf{Act})$, functional actions $(\mathsf{Fnc})$ and agents $(\mathsf{Ag})$. Typically, the languages of dynamic logics are constructed using e.g.\ agents or actions as parameters for the modal operators. The basic intuition is that the multi-type environment makes it possible to regard these parameters as first-class citizens, on  a par with formulas. Hence, IEAK-formulas such as $\langle\alpha\rangle \langle\aga\rangle A$---where $\langle\alpha\rangle$ and $\langle\aga\rangle$ are the modal operators associated with the (functional) action $\alpha$ and with the agent $\aga$, respectively---are translated in the multi-type setting as $\alpha \mand_0 (\aga \mand_2 A)$ using  binary connectives which take different types as arguments, as illustrated below:
\begin{center}
${\mand}_0 : \mathsf{Fnc} \times \mathsf{Fm} \to \mathsf{Fm} \qquad\qquad {\mand}_2 : \mathsf{Ag} \times \mathsf{Fm} \to \mathsf{Fm}$.
\end{center}
Each type has its own natural domain of interpretation.
For instance, the domain of interpretation for formulas can be a powerset algebra, or the algebra of downward-closed subsets of some poset (cf.\ \cite{KP}); the domain of interpretation of agents can be a set; finally, following \cite{Alexandru}, the domains of interpretation for actions and functional actions can be a quantale and a monoid, respectively (note that, unlike general actions, functional actions are typically not closed under non-deterministic choice).

\noindent Depending on the features of each domain, different properties can be required of the connectives. For instance, in the case of the connectives above, we stipulate that they be completely join-preserving in their second coordinate, and hence, by general order-theoretic facts, each of them has a right adjoint in its second coordinate. Therefore, the following additional connectives have a natural interpretation:
 \begin{center}
${\mbra}_0 : \mathsf{Fnc} \times \mathsf{Fm} \to \mathsf{Fm} \qquad\qquad {\mbra}_2 : \mathsf{Ag} \times \mathsf{Fm} \to \mathsf{Fm}$.
\end{center}


\noindent However, the present setting does not permit us to assume that the right adjoints in the first coordinate exist for either $\mand_0$ or $\mand_2$.
The lack of certain adjoints occurs in many situations, and is due to diverse reasons. For instance, by definition, the modal operators in monotone modal logic do not have adjoints. We will come back to this point later on.


\noindent Just like in standard display calculi, each operational connective corresponds to some structural connective (or proxy), that is interpreted in a context-sensitive way. To illustrate some introduction rules for operational connectives in the case of our running example, the translation mapping IEAK-formulas  into multi-type terms (of which the example above $\langle\alpha\rangle\langle\aga\rangle A\mapsto \alpha ?_0 (\aga\mand_2 A)$ is an instance) motivates  the following introduction rules for the binary connective $\mand_i$, for $i = 0, 2$:

{\footnotesize{
\begin{center}
\begin{tabular}{cc}
\AX$a \mAND_{\!\!i\,} A \fCenter X$
\UI$a \mand_i A \fCenter X$
\DisplayProof
 & 
\AX$x \fCenter a$
\AX$X \fCenter A$
\BI$x \mAND_{\!\!i\,} X \fCenter a \mand_i A$
\DisplayProof
\\
\end{tabular}
\end{center}
}}

\noindent as the natural counterparts of the following standard introduction rules for diamond-like modal operators:

{\footnotesize{
\begin{center}
\begin{tabular}{cc cc}
\AX$\{\aga\} A \fCenter X$
\UI$\lc\aga\rc A \fCenter X$
\DisplayProof
 &
 \AX$\{\alpha\} A \fCenter X$
\UI$\lc\alpha\rc A \fCenter X$
\DisplayProof
 &
\AX$X \fCenter A$
\UI$\{\aga\} X \fCenter \lc\aga\rc A$
\DisplayProof
&

\AX$X \fCenter A$
\UI$\{\alpha\} X \fCenter \lc\alpha\rc A$
\DisplayProof
\\
\end{tabular}
\end{center}
}}

\noindent 
Notice that, in the multi-type setting, each sequent is always interpreted in one domain. However, when  connectives take arguments of different types, premises of binary rules are of course interpreted in different domains.

\noindent Whenever  adjoints are available, display postulates such as the following ones can be added to the system. For $i = 0, 2$, the structural connectives $\mAND_{\!\!i\,}$ and $\mBRA_{i\,}$ are associated with $\mand_i$ and $\mbra_i$, respectively. Assuming that $\mand_i$ are left adjoints guarantees the soundness of the rules below whether or not the operational connectives $\mbra_i$ belong to the language.

{\footnotesize{
\begin{center}
\begin{tabular}{c}
\AX$x \mAND_{\!\!i\,} y \fCenter z$
\doubleLine
\UI$y \fCenter x {\mBRA_{\!i\,}} z$
\DisplayProof \\
\end{tabular}
\end{center}
}}

%
%
%

\noindent Most  types in the example above naturally arise  from the specification of the logic IEAK in its original presentation. However, the example below shows that additional types might be added for technical reasons. Consider for instance 
the following  rule {\em balance} introduced in \cite[section 5]{Multitype}:
\label{page:balance}

{\footnotesize{
\begin{center}
\AX$ X\fCenter Y$
\UI$ F \mAND_{\!\!0\,} X\fCenter F \mRA_{\!0\,} Y$
\DisplayProof
\end{center}
}}

\noindent This rule is sound for functional actions but not for general actions. Introducing functional and general actions as different types makes it possible for \emph{balance} to be closed under uniform substitution of arbitrary structures for congruent operational terms  \emph{within each type}, and thus for the whole system to satisfy conditions C$'_6$ and C$'_7$ below (see section \ref{sec:quasi}). 
 
 \noindent Our syntactic stipulations require that each term be assigned to a unique type {\em unambiguously}. This is a crucial requirement for the cut elimination meta-theorem of the next section, and will be explicitly stated in condition C$'_2$ below (see section \ref{sec:quasi}).
%
%
%
The notion of type-uniformity will be critical to the multi-type setting.
\begin{defi}
\label{type-uniformity}
A sequent $x\vdash y$ is {\em type-uniform} if $x$ and $y$ are of the same type.
\end{defi}
\noindent The proposition below is shown by a straightforward  induction on the minimal depth of derivation trees.
\begin{prop}
If a multi-type proof system is such that axioms are type-uniform and rules preserve type-uniformity, then every derivable sequent is type-uniform.
\end{prop}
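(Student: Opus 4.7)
The plan is a straightforward induction on the minimal depth $d$ of a derivation tree witnessing that $x \vdash y$ is derivable, exactly as the paper hints. I would set up the induction on $d \in \mathbb{N}$, with the inductive statement: every sequent derivable by a tree of minimal depth at most $d$ is type-uniform in the sense of Definition~\ref{type-uniformity}.

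For the base case $d = 0$, a derivation of minimal depth $0$ consists of a single axiom, so $x \vdash y$ is an instance of some axiom of the proof system; by the first hypothesis on the calculus, every such axiom is type-uniform, and we are done. For the inductive step, suppose $x \vdash y$ is derivable with minimal depth $d+1$. Then $x \vdash y$ is the conclusion of an application of some rule $\rho$ of the system whose premises are derivable with minimal depth at most $d$. By the induction hypothesis, each premise of this rule application is type-uniform. By the second hypothesis—that rules preserve type-uniformity—the conclusion $x \vdash y$ is itself type-uniform, which closes the induction.

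There is no real obstacle here: the proposition is essentially a transcription of the inductive closure of two assumptions, and the argument does not interact with the specific structure of multi-type connectives, with the display postulates, or with any of the conditions C$'_i$ to be formulated in Section~\ref{sec:quasi}. The only mildly subtle point, and the reason it is convenient to induct on the \emph{minimal} depth rather than on an arbitrary chosen derivation, is that when several derivations witness the same sequent we only need one of them to run the argument; taking the minimum guarantees that every premise of the last rule application admits a derivation strictly shorter than $d+1$, so the induction hypothesis applies uniformly.
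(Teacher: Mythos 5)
Your proof is correct and is exactly the argument the paper intends: the paper merely remarks that the proposition ``is shown by a straightforward induction on the minimal depth of derivation trees,'' and your base case (type-uniformity of axioms) and inductive step (preservation of type-uniformity by rules) fill in that induction in the obvious way. Nothing further is needed.
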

In a display calculus, the cut rule is typically of the following form:

{\footnotesize{
\begin{center}
\AX$X \fCenter A$
\AX$A \fCenter Y$
\RightLabel{$Cut$}
\BI$X \fCenter Y$
\DisplayProof
\end{center}
}}

\noindent where $X, Y$ are structures and $A$ is a formula.
This translates straightforwardly to the multi-type environment, by the stipulation that  cut rules of the form

{\footnotesize{
\begin{center}
\AX$x \fCenter a$
\AX$a \fCenter y$
\RightLabel{$Cut$}
\BI$x \fCenter y$
\DisplayProof
\end{center}
}}

\noindent are allowed in the given multi-type system for each type.
Systems not enjoying the display property will be endowed with the following more general \emph{surgical cut}\label{surgical cut} rules:

{\footnotesize{
\begin{center}
\begin{tabular}{cc}
\AX$x \fCenter a$
\AX$(z \fCenter y) [a]^{pre}$
\RightLabel{$LCut$}
\BI$(z \fCenter y) [x/a]^{pre}$
\DisplayProof
&
\AX$(x \fCenter z) [a]^{suc}$
\AX$a \fCenter y$
\RightLabel{$R Cut$}
\BI$(x \fCenter z) [y/a]^{suc}$
\DisplayProof
\end{tabular}
\end{center}
}}

\noindent and the cut rules, like all other rules, will be required to preserve type-uniformity.

\noindent This finishes the discussion about the generalisation of display calculi along the multi-type dimension. The second dimension of our generalisation concerns weakening the \emph{visibility} property. The introduction rules for connectives will be still required to satisfy the visibility condition, but principal operational terms introduced by means of axioms will not be required anymore to occur in isolation (see condition C$'_5$, section \ref{sec:quasi}). However, a 
companion condition will be still required (see condition C$''_5$, section \ref{sec:quasi}), making sure that it is always possible to equivalently replace any  such axiom with a suitable one in which the given principal operational term occurs in isolation.

\section{Proper multi-type calculi}
\label{sec:quasi}

A multi-type calculus is  {\em proper} if it satisfies the following list of conditions (see \cite{GAV} for a discussion on C$'_5$ and C$''_5$):



\noindent \textbf{C$_1$: Preservation of operational terms.\;} Each operational term occurring in a premise of an inference rule {\em inf} is a subterm of some operational term in the conclusion of {\em inf}.

\noindent \textbf{C$_2$: Shape-alikeness of parameters.\;} Congruent parameters (i.e.\ non-active terms in the application of a rule) are occurrences of the same structure.

\noindent \textbf{C$'_2$: Type-alikeness of parameters.\;}  Congruent parameters have exactly the same type. This condition bans the possibility that a parameter changes type along its history.

\noindent \textbf{C$_3$: Non-proliferation of parameters.\;} Each parameter in an inference rule {\em inf} is congruent to at most one constituent in the conclusion of {\em inf}.

\noindent \textbf{C$_4$: Position-alikeness of parameters.\;} Congruent parameters are either all precedent or all succedent parts of their respective sequents. In the case of calculi enjoying the display property, precedent and succedent parts are defined in the usual way (see \cite{Belnap}). Otherwise, these notions can still be defined by induction on the shape of the structures, by relying on the polarity of each coordinate of the structural connectives.

\noindent \textbf{C$'_5$: Quasi-display of principal constituents.\;} If an operational term $a$ is principal in the conclusion sequent $s$ of a derivation $\pi$, then $a$ is in display, unless $\pi$ consists only of its conclusion sequent $s$ (i.e.\ $s$ is an axiom).

\noindent \textbf{C$''_5$: Display-invariance of axioms.} If $a$ is principal in an axiom $s$, then $a$ can be isolated by applying Display Postulates and the new sequent is still an axiom.

\noindent \textbf{C$'''_5$: Closure of axioms under surgical cut.} If $(x\vdash y)([a]^{pre}, [a]^{suc})$,  $a\vdash z [a]^{suc}$ and $v[a]^{pre}\vdash a$ are axioms, then $(x\vdash y)([a]^{pre}, [z/a]^{suc})$ and $(x\vdash y)([v/a]^{pre}, [a]^{suc})$  are again axioms. 

\noindent \textbf{C$'_6$: Closure under substitution for succedent parts within each type.\;} Each rule is closed under simultaneous substitution of arbitrary structures for congruent operational terms occurring in succedent position, {\em within each type}.

\noindent \textbf{C$'_7$: Closure under substitution for precedent parts within each type.\;} Each rule is closed under simultaneous substitution of arbitrary structures for congruent operational terms occurring in precedent position, {\em within each type}.

\noindent Condition C$_6'$ (and likewise C$_7'$) ensures, for instance, that if the following inference is an application of the rule $R$:

\begin{center}
\AX$(x \fCenter y) \big([a]^{suc}_{i} \,|\, i \in I\big)$
\RightLabel{$R$}
\UI$(x' \fCenter y') [a]^{suc}$
\DisplayProof
\end{center}

\noindent and $\big([a]^{suc}_{i} \,|\, i \in I\big)$ represents all and only  the occurrences of $a$ in the premiss which are congruent to the occurrence of $a$  in the conclusion (if $I = \varnothing$, then the occurrence of $a$ in the conclusion is congruent to itself), 
then also the following inference is an application of the same rule $R$:

\begin{center}
\AX$(x \fCenter y) \big([z/a]^{suc}_{i} \,|\, i \in I\big)$
\RightLabel{$R$}
\UI$(x' \fCenter y') [z/a]^{suc}$
\DisplayProof
\end{center}

\noindent where the structure $z$ is substituted for $a$.

\noindent This condition caters for the step in the cut elimination procedure in which the cut needs to be ``pushed up'' over rules in which the cut-formula in  succedent position  is parametric (cf.\ Section \ref{sec:meta}).

\noindent \textbf{C$'_8$: Eliminability of matching principal constituents.\;}
This condition requests a standard Gentzen-style checking, which is now limited to the case in which  both cut formulas are {\em principal}, i.e.~each of them has been introduced with the last rule application of each corresponding subdeduction. In this case, analogously to the proof Gentzen-style, condition C$'_8$ requires being able to transform the given deduction into a deduction with the same conclusion in which either the cut is eliminated altogether, or is transformed in one or more applications of the cut rule, involving proper subterms of the original operational cut-term. In addition to this, specific to the multi-type setting is the requirement that the new application(s) of the cut rule be also {\em strongly type-uniform} (cf.\ condition C$_{10}$ below).

\noindent \textbf{C$_9$: Type-uniformity of derivable sequents.} Each derivable sequent is type-uniform. 


\noindent \textbf{C$_{10}$: Preservation of type-uniformity of cut rules.} All cut rules preserve type-uniformity (cf.\  Definition \ref{type-uniformity}).

\section{Cut elimination meta-theorem}
\label{sec:meta}

\begin{theo}
\label{thm:meta multi}
Any multi-type sequent calculus satisfying C$_2$-C$_{10}$ is cut-admissible. If also C$_1$ is satisfied, then the calculus enjoys the subformula property.
\end{theo}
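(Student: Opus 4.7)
The plan is to adapt Belnap's classical cut-elimination argument to the present multi-type, quasi-display setting, proceeding by a double induction. The outer induction is on the complexity of the operational cut-formula $a$, and the inner induction is on the sum of the heights of the derivations $\pi_1,\pi_2$ of the two premises of a topmost application of $Cut$, $LCut$ or $RCut$. For each such application, I carry out a case analysis on whether $a$ is principal or parametric in $\pi_1$, and symmetrically for $\pi_2$. Throughout, condition C$_9$ guarantees that all sequents involved are type-uniform, and C$_{10}$ that every new cut introduced by the transformations remains type-uniform; C$'_2$ ensures that each parametric occurrence retains its type along its history, so that ``substitution within each type'' is always a meaningful operation.

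The easy case is when $a$ is principal in both $\pi_1$ and $\pi_2$. By C$'_5$, $a$ is in display in both premises, or, if a premise is an axiom, by C$''_5$ it can be displayed by applying display postulates while still leaving the sequent an axiom. The situation then falls under the hypothesis of C$'_8$, which rewrites the cut into one or more cuts on proper subterms of $a$; these are strongly type-uniform, so the outer induction hypothesis applies and they can be removed.

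The main bulk is the parametric case, say $a$ parametric in $\pi_1$. I would use Belnap's trace strategy: the occurrences of $a$ congruent to the cut-occurrence form, by C$_3$, a tree of single-ancestor histories rooted at the cut and terminating at nodes where $a$ is either principal or part of an axiom. C$_2$ ensures that each ancestor is literally the operational term $a$, C$_4$ that all ancestors share polarity, and C$'_2$ that they share type. At axiom leaves one uses C$''_5$ to isolate $a$ and C$'''_5$ to conclude that the surgically substituted axiom remains an axiom; at principal leaves one invokes the inner induction hypothesis (same cut-formula, strictly smaller total height). The key manoeuvre is to push the surgical cut past parametric rule applications by simultaneous substitution of the cut-displayed structure for all congruent occurrences of $a$: C$'_6$ takes care of succedent parameters and C$'_7$ of precedent ones, re-certifying the resulting inference as the same rule.

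The main obstacle will be precisely this last step. Because only quasi-display is available, the parametric cut-formula need not sit in isolation, so one must carry out the argument with $LCut$ and $RCut$ rather than with the ordinary $Cut$, and keep track of where the congruent occurrences lie inside the structural context. Verifying that the simultaneous substitution prescribed by C$'_6$/C$'_7$ is compatible with the occurrence pattern $[\,\cdot\,]^{pre}$ or $[\,\cdot\,]^{suc}$ required by the surgical cut rule, and that no occurrence inadvertently swaps polarity (ruled out by C$_4$), is the delicate bookkeeping one must dispatch. Once this is established, cut-admissibility follows. For the final clause, under the additional hypothesis C$_1$ the subformula property is immediate by induction on the height of a cut-free derivation, since every operational term appearing above a sequent $s$ is then a subterm of one appearing in $s$.
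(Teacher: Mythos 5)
Your proposal follows essentially the same route as the paper, which adapts Wansing's version of Belnap's argument: principal moves dispatched by C$'_8$ (with C$'''_5$ covering the cut of two axioms), parametric moves by tracing the history tree of the cut term and pushing the surgical cut to its leaves via the substitution closure C$'_6$/C$'_7$, with C$'_5$ and C$''_5$ governing whether the leaf occurrence is in display. However, your case analysis of the leaves is incomplete: you assert that the history tree terminates only at nodes where $a$ is either principal or part of an axiom, but there is a third kind of leaf, namely an occurrence $a_u$ introduced \emph{as a parameter} by a rule application, e.g.\ by weakening, \emph{atom} or \emph{balance} (Wansing's case (2), treated explicitly in the paper). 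Such an occurrence is congruent only to itself, is not principal, and need not occur in an axiom. The paper handles it by using C$'_7$ (resp.\ C$'_6$), together with type-alikeness, to re-instantiate the very same rule with $z$ substituted for $a_u$, whereupon the whole premise $\pi_1$ is discarded and the cut vanishes rather than being pushed further up. This case is particularly relevant in the multi-type setting because of structural rules like \emph{balance}; omitting it leaves the induction on the history tree unfinished.

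A smaller misallocation of conditions: C$'''_5$ is not what certifies the axiom leaves in the parametric move. There the paper uses only C$''_5$, to display $a_u$ while the sequent remains an axiom, and then proceeds as in the displayed-principal subcase; C$'''_5$ is reserved for the principal move in which \emph{both} cut premises are axioms. Your treatment of the principal moves, of the displayed-parametric leaves, and of the subformula property from C$_1$ otherwise matches the paper.
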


\begin{proof} 
We follow the proof in \cite[Section 3.3, Appendix A]{Wan02}. For the sake of conciseness, we will expand only on the parts of the proof which depart from the treatment in \cite{Wan02}. In particular, we consider elimination of surgical cuts (cf.\ page \pageref{surgical cut}). As to the principal move, the only difference concerns the case of a surgical cut application both premises of which are axioms. Condition C$'''_5$ guarantees that this cut application can be eliminated. The remaining principal moves go exactly as in \cite{Wan02}, thanks to C$'_8$. 
As to the parametric moves, we are in the following situation:

{\footnotesize{
\begin{center}
\AXC{\ \ \ $\vdots$ \raisebox{1mm}{$\pi_1$}}
\noLine
\def\fCenter{\vdash}
\UI$z \fCenter a$
\AXC{\ \ \ \ \ $\vdots$ \raisebox{1mm}{$\pi_{2.1}$}}
\noLine
\UI$(x_1 \fCenter y_1) [a_{u_1}]^{pre}$
\AXC{$\cdots{\phantom{\vdash}}$}
\AXC{\ \ \ \ \ $\vdots$ \raisebox{1mm}{$\pi_{2.n}$}}
\noLine
\UI$(x_n \fCenter y_n) [a_{u_n}]^{pre}$
\noLine
\TIC{\ \,$\rule[-5.2mm]{0mm}{0mm}\ddots\vdots\iddots\rule{0mm}{10mm}$ \raisebox{1mm}{$\pi_2$}}
\noLine
\UI$(x \fCenter y) [a]^{pre}$
\BI$(x \fCenter y) [z]^{pre}$
\DisplayProof
\end{center}
}}

\noindent where $(x \vdash y)[z]^{pre}[w]^{suc}$ means that $z$ and $w$ respectively occur in precedent and succedent position in $x \vdash y$, and the cut term $a$ is parametric in the conclusion of $\pi_2$ (the other case is symmetric). 

Conditions C$_2$-C$_4$ make it possible to follow the history of that occurrence of $a$, since these conditions enforce that the history takes the shape of a tree, of which we consider each leaf. Let $a_{u_i}$ (abbreviated to $a_u$ from now on) be one such uppermost-occurrence in the history-tree of the parametric cut term $a$ occurring in $\pi_2$, and let $\pi_{2.i}$ be the subderivation ending in the sequent $x_i \vdash y_i$, in which $a_u$ is introduced. 

Wansing's case (1) splits into two subcases: (1a) $a_u$ is introduced in display; (1b) $a_u$ is not introduced in display. Condition C$'_5$ guarantees that (1b) can only be the case when $a_u$ has been introduced via an axiom. 

If (1a), then we can perform the following transformation:

{\footnotesize{
\begin{center}
\begin{tabular}{lcr}
\bottomAlignProof
\AXC{\ \ \ \,$\vdots$ \raisebox{1mm}{$\pi_1$}}
\noLine
\UI$z \fCenter a$
\AXC{\ \ \ \ \,$\vdots$ \raisebox{1mm}{$\pi_{2.i}$}}
\noLine
\UI$a_u \fCenter y_i$
\noLine
\UIC{\ \ \ \,$\vdots$ \raisebox{1mm}{$\pi_2$}}
\noLine
\UI$(x \fCenter y) [a]^{pre}$
\BI$(x \fCenter y) [z]^{pre}$
\DisplayProof

 & $\rightsquigarrow$ &

\bottomAlignProof
\AXC{\ \ \ $\vdots$ \raisebox{1mm}{$\pi_1$}}
\noLine
\UI$z \fCenter a$
\AXC{\ \ \ \ \ $\vdots$ \raisebox{1mm}{$\pi_{2.i}$}}
\noLine
\UI$a_u \fCenter y_i$
\BI$z \fCenter y_i$
\noLine
\UIC{\ \ \ \ \ \ \ \ \ \ \ \ \ $\vdots$ \raisebox{1mm}{$\pi_2 [z/a]^{pre}$}}
\noLine
\UI$(x \fCenter y) [z]^{pre}$
\DisplayProof
 \\
\end{tabular}
\end{center}
}}

\noindent where $\pi_2 [z/a]^{pre}$ is the derivation obtained by substituting $z$ for every occurrence in the history of $a$. Notice that the assumption that $a$ is parametric in the conclusion of $\pi_2$ and that $a_u$ is principal in {\em inf} imply that $\pi_2$ has more than one node, and hence the transformation above results in a cut application of strictly lower height. Moreover, the assumptions that the original cut preserves type-uniformity (C$_{10}$), that every derivable sequent is type-uniform (C$_9$), and the type-alikeness of parameters (C$'_2$) imply that the sequent $a_u \vdash y_i$ is of the same type as the sequents $z \vdash a$. Hence, in particular, the new cut preserves type-uniformity. Finally, condition C$'_7$ implies that the substitution of $z$ for $a$ in $\pi_2$ gives rise to an admissible derivation $\pi_2[z/a]^{pre}$ in the calculus (use C$'_6$ for the symmetric case).
If (1b), i.e.\ if $a_u$ is the principal formula of an axiom, the situation is illustrated below in the derivation on the left-hand side:

{\footnotesize{
\begin{center}
\begin{tabular}{lcr}
\bottomAlignProof
\AXC{\ \ \ $\vdots$ \raisebox{1mm}{$\pi_1$}}
\noLine
\UI$z \fCenter a$
%
\AXC{$(x_i \fCenter y_i) [a_u]^{pre} [a]^{suc}$}
\noLine
\UIC{\ \ \ $\vdots$ \raisebox{1mm}{$\pi_2$}}
\noLine
\UI$(x \fCenter y) [a]^{pre}$
\BI$(x \fCenter y) [z]^{pre}$
\DisplayProof

 & $\rightsquigarrow$ &

\bottomAlignProof
\AXC{\ \ \,$\vdots$ \raisebox{1mm}{$\pi_1$}}
\noLine
\UI$z \fCenter a$
%
\AXC{$a_u \fCenter y' [a]^{suc}$}
\BI$z \fCenter y' [a]^{suc}$
\noLine
\UIC{\ \ \ $\vdots$ \raisebox{1mm}{$\pi'$}}
\noLine
\UI$(x_i \fCenter y_i) [z/a_u]^{pre} [a]^{suc}$
\noLine
\UIC{\ \ \ \ \ \ \ \ \ \ \ \ \ $\vdots$ \raisebox{1mm}{$\pi_2 [z/a]^{pre}$}}
\noLine
\UI$(x \fCenter y) [z]^{pre}$
\DisplayProof
 \\
\end{tabular}
\end{center}
}}

\noindent where $(x_i \fCenter y_i) [a_u]^{pre}[a]^{suc}$ is an axiom. Then, condition C$''_5$ implies that some sequent $a_u \fCenter y' [a]^{suc}$ exists, which is display-equivalent to the first axiom, and in which $a_u$ occurs in display. This new sequent can be either identical to $(x_i \fCenter y_i) [a_u]^{pre}[a]^{suc}$, in which case we proceed as in case (1a), or it can be different, in which case, condition C$''_5$ guarantees that it  is an axiom as well. Further, if $\pi$ is the derivation consisting of applications of display postulates which transform the latter axiom into the former, then let $\pi' = \pi[z/a]^{pre}$. As discussed when treating (1a), the assumptions imply that $\pi_2$ has more than one node, so the transformation described above results in a cut application of strictly lower height. Moreover, the assumptions that the original cut preserves type-uniformity, that every derivable sequent is type-uniform, and the type-alikeness of parameters imply that the sequent $a_u \vdash y' [a]^{suc}$ is of the same type as the sequent $z \vdash a$. Hence, the new cut is strongly type-uniform. Finally, condition C$'_7$ implies that substituting $z$ for $a$ in $\pi_2$ and in $\pi$ gives rise to admissible derivations $\pi_2 [z/a]$ and $\pi'$ in the calculus (use C$'_6$ for the symmetric case).

As to Wansing's case (2), assume that $a_u$ has been introduced as a parameter in the conclusion of $\pi_{2.i}$ by an application {\em inf} of the rule {\em Ru}. For instance, in the calculus of \cite[Section 5]{Multitype}, this situation can arise if $a$ is of type formula $\mathsf{Fm}$ and it is introduced by {\em weakening}, or if $a$ is of type functional actions $\mathsf{Fnc}$ and it is introduced by the rule {\em balance} or {\em atom}. Since $a_u$ is a leaf in the history-tree of $a$, this implies that $a_u$ is congruent only to itself in $\pi_{2.i}$. Hence, conditions C$'_7$, the assumption that the original cut is quasi strongly type-uniform, and the type-alikeness of parameters (C$'_2$) imply that the sequent $(x_i \fCenter y_i) [a_u]^{pre}$ can be replaced in the conclusion of $\pi_{2.i}$ by the sequent $(x_i \fCenter y_i) [z/a_u]^{pre}$ by means of an application of the same rule {\em Ru}. Let $\pi_{2.i}'$ be the resulting derivation. 

Therefore, the transformation below yields a derivation where $\pi_1$ is not used at all and the cut disappears.

{\footnotesize{
\begin{center}
\begin{tabular}{lcr}
\bottomAlignProof
\AXC{\ \ \ \,$\vdots$ \raisebox{1mm}{$\pi_1$}}
\noLine
\UI$z \fCenter a$
\AXC{\ \ \ \ \ \,$\vdots$ \raisebox{1mm}{$\pi_{2.i}$}}
\noLine
\UI$(x_i \fCenter y_i) [a_u]^{pre}$
\noLine
\UIC{\ \ \ \ $\vdots$ \raisebox{1mm}{$\pi_2$}}
\noLine
\UI$(x \fCenter y) [a]^{pre}$
\BI$(x \fCenter y) [z]^{pre}$
\DisplayProof

& $\rightsquigarrow$ &

\bottomAlignProof
\AXC{\ \ \ \ \ $\vdots$ \raisebox{1mm}{$\pi_{2.i}'$}}
\noLine
\UI$(x_i \fCenter y_i) [z/a_u]^{pre}$
\noLine
\UIC{\ \ \ \ \ \ \ \ \ \ \ \ \ $\vdots$ \raisebox{1mm}{$\pi_2 [z/a]^{pre}$}}
\noLine
\UI$(x \fCenter y) [z]^{pre}$
\DisplayProof
\end{tabular}
\end{center}
}}

From this point on, the proof proceeds like in \cite{Wan02}. It is useful to emphasise that the need to combine principal and parametric moves arises in multi-type settings such as \cite{Multitype} not only because of contraction or additive rules, but also due to the presence of structural rules such as 

{\footnotesize{
\begin{center}
\begin{tabular}{cc}
\AX$(x \mBAND Y)\,; (x \mBAND Z) \fCenter W $
\UI$x \mBAND (Y\,; Z) \fCenter W$
\DisplayProof

 & 
\AX$W \fCenter (x \mAND Y) > (x \mRA Z)$
\UI$W \fCenter x \mRA (Y > Z)$
\DisplayProof
\end{tabular}
\end{center}
}}

\end{proof}

\end{document}